\newtheorem{thm}{Theorem}[section]
\newtheorem{false statement}{False statement}
\newtheorem{cor}[thm]{Corollary}
\theoremstyle{definition}
\makeatletter \@addtoreset{equation}{section}
\newcommand{\ex}{{\rm ex}}
\def\hh{\mathcal{H}}
\def\hg{\mathcal{G}}
\def\hm{\mathcal{M}}
\def\hn{\mathbb{N}}
\def\ex{\mathbb{E}}
\begin{document}
\title{\bf\Large A Note on Minimum Degree Condition for Hamilton $(a,b)$-Cycles in Hypergraphs}
\date{}
\author{Jian Wang\\[10pt]
Department of Mathematics\\
Taiyuan University of Technology\\
Taiyuan 030024, P. R. China\\[6pt]
E-mail:  wangjian01@tyut.edu.cn
}

\maketitle

\begin{abstract}
Let $k,a,b$ be positive integers with $a+b=k$. A $k$-uniform hypergraph is called an $(a,b)$-cycle if there is a partition $(A_0,B_0,A_1,B_1,\ldots,A_{t-1},B_{t-1})$ of the vertex set with $|A_i|=a$, $|B_i|=b$ such that $A_i\cup B_i$ and $B_i\cup A_{i+1}$ (subscripts module $t$) are edges for all $i=0,1,\ldots,t-1$. Let $\hh$ be a $k$-uniform $n$-vertex hypergraph with $n\geq 5k$ and $n$ divisible by $k$. By applying the concentration inequality for intersections of a uniform hypergraph with a random matching developed by Frankl and Kupavskii, we show that if  there exists $\alpha\in (0,1)$ such that $\delta_a(\hh)\geq (\alpha +o(1))\binom{n-a}{b}$ and $\delta_b(\hh)\geq (1-\alpha +o(1))\binom{n-b}{a}$, then $\hh$ contains a Hamilton $(a,b)$-cycle. As a corollary, we prove that if $\delta_{\ell}(\hh)\geq (1/2 +o(1))\binom{n-\ell}{k-\ell}$ for some $\ell \geq k/2$, then $\hh$ contains a Hamilton $(k-\ell,\ell)$-cycle and this is asymptotically best possible.
\end{abstract}

\section{Introduction}

Let $[n]=\{1,2,\ldots,n\}$ and $\binom{[n]}{k}$ be the collection of all $k$-subsets of $[n]$. A subfamily $\hh$ of $\binom{[n]}{k}$ is called a {\it $k$-uniform hypergraph} with the vertex set $[n]$ and the edge set $\hh$. Given  $\hh\subseteq \binom{[n]}{k}$ and a $d$-subset $S$ of $V$, let $\deg_{\hh}(S)$ denote  the number of edges of $\hh$ containing $S$. The {\it minimum $d$-degree} $\delta_d(\hh)$ of $\hh$ is the minimum of $\deg_{\hh}(S)$ over all $d$-subsets $S$ of $[n]$.

Let $G$ be a simple graph on $n$ vertices with $n\geq 3$. A {\it Hamilton cycle} of $G$ is a cyclic ordering of all its vertices so that any two consecutive vertices are connected by an edge. In 1952, Dirac \cite{dirac} proved that if $\delta(G)\geq n/2$ then $G$ contains a Hamilton cycle, which is one of the most classical results in graph theory. In recent years, hypergraph generalisations of Dirac's theorem are well studied and we refer to \cite{kuhn-osthus, rodl, zhao} for surveys on this topic.

Let $n,k,\ell$ be positive integers with $\ell<k$ and $(k-\ell)|n$. A $k$-uniform hypergraph is called an {\it $\ell$-cycle} if there is a cyclic ordering of the vertices such that every edge consists of $k$ consecutive vertices, every vertex is contained in an edge and two consecutive edges (where the ordering of the edges is inherited from the ordering of the vertices) intersect in exactly $\ell$-vertices. We say a $k$-uniform hypergraph $\hh$ contains a Hamilton $\ell$-cycle if there is a subhypergraph of $\hh$ which forms an $\ell$-cycle, which covers all vertices of $\hh$. Confirming a conjecture of Katona and Kierstead \cite{katona}, R\"{o}dl, Ruci\'{n}ski and Szemer\'{e}di \cite{rodlrs-1,rodlrs-2} showed that every $k$-uniform $n$-vertex hypergraph $\hh$ with $\delta_{k-1}(\hh)\geq n/2+o(n)$ contains a Hamilton $(k-1)$-cycle. This is best possible up to the $o(n)$ term by a construction given by Katona and Kierstead \cite{katona}. Very recently,  H\`{a}n, Han and Zhao \cite{hanhanzhao} determined the exact minimum $d$-degree condition that guarantees the existence of a Hamilton $(k/2)$-cycle in every $k$-uniform hypergraph on $n$ vertices for every even $k\geq 6$, $k/2\leq d \leq k-1$ and sufficiently large $n\in (k/2)\hn$.

In this note, we introduce a  different notion of hypergraph cycles.  To the best of our knowledge, it is  first considered.   Let $k,a,b$ be positive integers with $k=a+b$. A $k$-uniform hypergraph is called an $(a,b)$-cycle if there is a partition $(A_0,B_0,A_1,B_1,\ldots,A_{t-1},B_{t-1})$ of the vertex set with $|A_i|=a$, $|B_i|=b$ such that $A_i\cup B_i$, $B_i\cup A_{i+1}$ (subscripts module $t$) are edges for all $i=0,1,\ldots,t-1$.  We say a $k$-uniform, $n$-vertex hypergraph $\hh$ contains a Hamilton $(a,b)$-cycle if there is a subhypergraph of $\hh$ which forms an $(a,b)$-cycle and which covers all vertices of $\hh$. Note that a Hamilton $(k-1)$-cycle always contains a Hamilton $(a,b)$-cycle. Thus,  R\"{o}dl, Ruci\'{n}ski and Szemer\'{e}di's result \cite{rodlrs-2} implies that every $k$-uniform $n$-vertex hypergraph $\hh$ with $\delta_{k-1}(\hh)\geq n/2+o(n)$ contains a Hamilton $(a,b)$-cycle for every $a,b\geq 1$ with $a+b=k$. For even $k$, a Hamilton $(k/2,k/2)$-cycle is exactly a Hamilton $(k/2)$-cycle. Thus, H\`{a}n, Han and Zhao's result \cite{hanhanzhao} implies the exact  minimum $d$-degree condition that guarantees the existence of a Hamilton $(a,b)$-cycle with $a=b=k/2$ for $k/2\leq d\leq k$ and even $k$.

By applying the concentration inequality for intersections of a uniform hypergraph with a random matching in \cite{frankl-kupavskii}, we prove the following result.

\begin{thm}\label{main-1}
 Let $n, k, a, b$ be positive integers with $n\in k\hn$, $n\geq 5k$ and $k=a+b$. Let $\hh\subset \binom{[n]}{k}$. If there exists $\alpha\in (0,1)$ such that  $\delta_a(\hh)\geq (\alpha +4\sqrt{k\log n/ n})\binom{n-a}{b}$ and $\delta_b(\hh)\geq (1-\alpha +4\sqrt{k\log n/ n})\binom{n-b}{a}$, then $\hh$ contains a Hamilton $(a,b)$-cycle.
\end{thm}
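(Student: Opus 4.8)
The plan is to run the absorption method, using the Frankl--Kupavskii concentration inequality to supply the probabilistic input. First I would normalise: the hypothesis and the conclusion are both invariant under the substitution $(a,b,\alpha)\mapsto(b,a,1-\alpha)$ (a Hamilton $(a,b)$-cycle read backwards is a Hamilton $(b,a)$-cycle), so I may assume $\alpha\le 1/2$, and in particular $\delta_b(\hh)\ge(1/2+4\sqrt{k\log n/n})\binom{n-b}{a}$; a one-line averaging argument also gives $|\hh|\ge(1-\alpha+o(1))\binom{n}{k}$, so $\hh$ is dense.

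The workhorse is a \emph{connecting lemma}: for any two disjoint $a$-sets $X,X'$ and any set $W$ of at most $\log^2 n$ extra forbidden vertices, there are polynomially many short $(a,b)$-paths of some fixed bounded length with ends $X$ and $X'$ that avoid $W$. To prove it one selects such a path at random — a bounded-length alternating chain of pairwise disjoint $a$- and $b$-blocks avoiding $W$ — and bounds below the probability that all of its $O(1)$ block-unions lie in $\hh$; the degree hypotheses control each union individually, and the Frankl--Kupavskii inequality, applied to the partial random matching formed by the pairwise disjoint unions along the chain, shows these events are not too negatively correlated, so the joint probability stays a positive constant. A variant of the same random construction gives, for each block-sized set of vertices, many vertex-disjoint short ``absorbers'' that can optionally swallow exactly those vertices while preserving their own two ends.

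With these lemmas I would proceed in the usual four steps: (i) take a random subcollection of absorbers and splice them into one short absorbing $(a,b)$-path $P_0$ so that $P_0$ can absorb any vertex set $W$ with $k\mid|W|$ and $|W|\le\gamma n$ for a small constant $\gamma$ (a Chernoff/Frankl--Kupavskii estimate shows a good subcollection exists); (ii) in $\hh-V(P_0)$, whose degree parameters are barely changed, cover all but at most $\gamma n$ vertices by a single long $(a,b)$-path $Q$ — this is the Frankl--Kupavskii-driven heart of the proof: a natural route is to grow $Q$ by repeatedly absorbing a random perfect matching of the currently uncovered vertices, retaining the constant fraction of matching edges that lie in $\hh$ (each split into an $a$-block and a $b$-block and patched in by the connecting lemma), so that, since a random matching meets $\hh$ in $(1-o(1))|\hh|/\binom{n}{k}$ of its edges with high probability, each round reliably shrinks the uncovered set by a fixed proportion; (iii) use the connecting lemma to join the four ends of $P_0$ and $Q$, obtaining an $(a,b)$-cycle missing only a set $W$ with $k\mid|W|$ and $|W|\le\gamma n$; (iv) absorb $W$ into $P_0$ to complete a Hamilton $(a,b)$-cycle.

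The step I expect to be the main obstacle is (ii), and more generally making every estimate uniform over the whole range $n\ge 5k$. When $k$ is a constant fraction of $n$, the blocks have size comparable to $n$, so disjointness corrections such as $\binom{n-2a}{b}/\binom{n-a}{b}$ are no longer $1-o(1)$ and the naive greedy-and-patch bounds break down; in that regime there are only $t=n/k=O(1)$ blocks, and one must instead build the whole cycle essentially directly rather than through absorption. Even for small $k$ one must ensure that the patching in (ii) does not itself consume a positive fraction of the vertices, and that the cumulative concentration error stays below the advertised $4\sqrt{k\log n/n}$ across the polynomially many applications of the Frankl--Kupavskii inequality in the connecting, absorbing, and covering steps.
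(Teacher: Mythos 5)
Your proposal is an absorption-method blueprint, and the obstacles you flag at the end are not side issues but fatal to the plan as stated. The theorem is claimed for all $n\ge 5k$ with $n\in k\hn$; when $n=5k$ the Hamilton $(a,b)$-cycle consists of exactly $t=n/k=5$ blocks of each type, so there is no room for a connecting lemma, an absorbing path $P_0$, a long path $Q$ covering all but $\gamma n$ vertices, or any of steps (i)--(iv) --- you acknowledge that in this regime ``one must instead build the whole cycle essentially directly,'' but you do not say how, and that unaddressed case is the whole difficulty of proving the statement with the stated quantifiers. Even for $k$ fixed and $n$ large, the absorption machinery accumulates constant-factor losses in the connecting and covering steps, so it cannot deliver the explicit additive error $4\sqrt{k\log n/n}$ in the degree hypotheses; it would prove a weaker ``$+\varepsilon$'' statement for $n\ge n_0(k,\varepsilon)$. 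The connecting lemma itself is also not free under these hypotheses: $\delta_a(\hh)\ge\alpha\binom{n-a}{b}$ with $\alpha$ possibly small means the link of an $a$-set can be quite sparse, and your appeal to Frankl--Kupavskii to control correlations along a bounded-length chain is not a proof that a positive proportion of candidate paths survive.

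The paper's actual argument avoids all of this and is essentially one page: choose a uniformly random $(a,b)$-partition $(A_1,\dots,A_t,B_1,\dots,B_t)$ of $[n]$, form the bipartite graph $G[X,Y]$ on the blocks with $A_iB_j$ an edge iff $A_i\cup B_j\in\hh$, and observe that conditioned on $A_i=A$ the family $Y$ is a uniformly random perfect $b$-matching of $[n]\setminus A$. The Frankl--Kupavskii concentration inequality (Theorem~\ref{thm-fk}) with $\gamma=2\sqrt{\log t}$ then gives $\deg_G(A_i)>\alpha t$ and $\deg_G(B_j)>(1-\alpha)t$ for all $i,j$ simultaneously with probability at least $1-4/t$, which is positive already for $t\ge 5$; the Moon--Moser Ore-type theorem yields a Hamilton cycle in $G[X,Y]$, which is exactly a Hamilton $(a,b)$-cycle of $\hh$. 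This is why the hypothesis $n\ge 5k$ suffices and why the error term is exactly $4\sqrt{k\log n/n}$: a single application of the concentration inequality per block, a union bound over $2t$ blocks, and no absorption at all. You should discard the absorption framework here and look for a reduction of the hypergraph problem to a Hamilton cycle in an auxiliary (bipartite) graph on the blocks of a random partition.
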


If $a\leq b$ and $\delta_b(\hh)\geq \beta\binom{n-b}{a}$, then we also have
\[
\delta_a(\hh) \geq \frac{1}{\binom{b}{b-a}}\binom{n-a}{b-a} \delta_b(\hh)\geq \beta\frac{\binom{n-a}{b-a}\binom{n-b}{a}}{\binom{b}{b-a}} = \beta \binom{n-a}{b}.
\]
Thus Theorem \ref{main-1} implies the following corollary.

\begin{cor}\label{cor-1}
Let $n, k,\ell$ be integers with $n\in k\hn$, $n\geq 5k$ and $k/2\leq \ell\leq k-1$. Let $\hh\subset \binom{[n]}{k}$. If $\delta_{\ell}(\hh)\geq (1/2 +4\sqrt{k\log n/ n})\binom{n-\ell}{k-\ell}$, then $\hh$ contains a Hamilton $(k-\ell,\ell)$-cycle.
\end{cor}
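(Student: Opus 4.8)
The plan is to deduce the corollary directly from Theorem~\ref{main-1} by a single averaging step, exploiting the fact that the hypothesis is stated at the larger of the two degree levels. Set $b=\ell$ and $a=k-\ell$; since $\ell\geq k/2$ we have $a\leq b$, and a Hamilton $(k-\ell,\ell)$-cycle is precisely a Hamilton $(a,b)$-cycle. Write $\beta=1/2+4\sqrt{k\log n/n}$, so the assumption reads $\delta_b(\hh)\geq\beta\binom{n-b}{a}$. I would apply Theorem~\ref{main-1} with the choice $\alpha=1/2\in(0,1)$. Then its second hypothesis, $\delta_b(\hh)\geq(1-\alpha+4\sqrt{k\log n/n})\binom{n-b}{a}=\beta\binom{n-b}{a}$, is immediate, so it remains only to verify the first hypothesis, $\delta_a(\hh)\geq(\alpha+4\sqrt{k\log n/n})\binom{n-a}{b}=\beta\binom{n-a}{b}$.

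To obtain this I would bound the $a$-degree below by the $b$-degree through a double count. Fix any $a$-set $S$ and count the pairs $(T,E)$ with $S\subseteq T\subseteq E$, $|T|=b$ and $E\in\hh$. Counting by $T$ first, each of the $\binom{n-a}{b-a}$ $b$-sets $T\supseteq S$ contributes $\deg_{\hh}(T)\geq\delta_b(\hh)$ edges; counting by $E$ first, every edge $E\supseteq S$ carries exactly $\binom{|E\setminus S|}{b-a}=\binom{b}{b-a}$ such sets $T$, while edges missing $S$ contribute nothing. Hence $\binom{b}{b-a}\deg_{\hh}(S)\geq\binom{n-a}{b-a}\delta_b(\hh)$, and minimising over $S$ recovers the averaging inequality already displayed in the text, $\delta_a(\hh)\geq\binom{n-a}{b-a}\delta_b(\hh)\big/\binom{b}{b-a}$.

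Combining this with $\delta_b(\hh)\geq\beta\binom{n-b}{a}$ yields $\delta_a(\hh)\geq\beta\,\binom{n-a}{b-a}\binom{n-b}{a}\big/\binom{b}{b-a}$, and the prefactor collapses by the subset-of-a-subset identity $\binom{n-a}{b}\binom{b}{b-a}=\binom{n-a}{b-a}\binom{n-b}{a}$ (choose a $b$-subset of an $(n-a)$-set together with a $(b-a)$-subset of it, versus first choosing the $(b-a)$-subset and then the remaining $a$ vertices from the leftover $n-b$). This gives exactly $\delta_a(\hh)\geq\beta\binom{n-a}{b}$, so both hypotheses of Theorem~\ref{main-1} hold with $\alpha=1/2$, and the theorem produces the desired Hamilton $(a,b)$-cycle. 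The conditions $n\in k\hn$, $n\geq 5k$ and $k=a+b$ transfer verbatim.

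There is essentially no genuine obstacle internal to the corollary: all of the analytic difficulty, namely the random-matching concentration argument, is packaged inside Theorem~\ref{main-1}, and what remains is the elementary averaging together with the binomial identity. The only point requiring a moment's care is that the asymmetry of the hypothesis is handled in the correct direction: one must transfer the degree condition from the larger level $b=\ell$ down to the smaller level $a=k-\ell$, which is why the averaging runs $\delta_b\to\delta_a$ (bounding a small set's degree by extending it to larger sets) and why the assumption $\ell\geq k/2$, i.e.\ $a\leq b$, is exactly what makes the inclusions $S\subseteq T\subseteq E$ meaningful.
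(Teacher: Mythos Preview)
Your proposal is correct and matches the paper's own derivation essentially verbatim: the paper also sets $a=k-\ell\le b=\ell$, invokes the averaging inequality $\delta_a(\hh)\geq\binom{n-a}{b-a}\delta_b(\hh)/\binom{b}{b-a}$ together with the identity $\binom{n-a}{b-a}\binom{n-b}{a}=\binom{b}{b-a}\binom{n-a}{b}$, and then applies Theorem~\ref{main-1} (implicitly with $\alpha=1/2$). Your double-counting justification of the averaging step and your verification of the binomial identity are both fine and simply spell out what the paper leaves as a one-line display.
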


It is easy to see that a Hamilton $(k-\ell,\ell)$-cycle of $\hh$ can be decomposed into two perfect matchings. Hence the same minimum $\ell$-degree condition in Corollary \ref{cor-1} also guarantees the existence of a perfect matching. Pikhurko \cite{pikhurko} showed that the major term of the asymptotically tight minimum $\ell$-degree to guarantee a perfect matching in an $n$-vertex $k$-uniform hypergraph is $\frac{1}{2}\binom{n-\ell}{k-\ell}$ for $k/2\leq \ell \leq k-1$ and $n\in k\hn$. It follows that the minimum $\ell$-degree condition in Corollary \ref{cor-1} is asymptotically best possible. We should also mention that Treglown and Zhao \cite{treglown-zhao1, treglown-zhao2} determined the tight minimum $\ell$-degree to guarantee a perfect matching in a $k$-uniform hypergraph for $k/2\leq \ell \leq k-1$.

Let $n, a, b\geq 1$ be integers and let $V_1,V_2$ be two disjoint sets with $|V_1|=an, |V_2|=bn$. Define the direct product $\binom{V_1}{a}\sqcup\binom{V_2}{b}$ as the collection of all subsets $F$ of $V_1\cup V_2$ with $|F\cap V_1|= a$ and $|F\cap V_2|= b$. By the same argument, we also obtain the following result.

\begin{thm}\label{main-2}
Let $\hh\subset \binom{V_1}{a}\sqcup \binom{V_2}{b}$ with $|V_1|=an$, $|V_2|=bn$ and $n\geq 5$. If there exists $\alpha\in (0,1)$ such that $\delta_a(\hh)\geq (\alpha +4\sqrt{\log n/ n})\binom{bn}{b}$ and $\delta_b(\hh)\geq (1-\alpha +4\sqrt{\log n/ n})\binom{an}{a}$, then $\hh$ contains a Hamilton $(a,b)$-cycle.
\end{thm}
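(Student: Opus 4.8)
The plan is to recast the problem as finding a Hamilton cycle in an auxiliary balanced bipartite graph, to build a suitable such graph from two independent random partitions whose degrees are controlled by the Frankl--Kupavskii concentration inequality, and to conclude with a bipartite Ore-type Hamiltonicity theorem.

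First I would set up the reduction. Any partition of $V_1$ into $a$-blocks $A_0,\dots,A_{n-1}$ together with a partition of $V_2$ into $b$-blocks $B_0,\dots,B_{n-1}$ determines a bipartite graph $G$ with vertex classes $\ha=\{A_0,\dots,A_{n-1}\}$ and $\hb=\{B_0,\dots,B_{n-1}\}$, where $A_i$ is joined to $B_j$ precisely when $A_i\cup B_j\in\hh$. Since $G$ is bipartite with both sides of size $n$, any Hamilton cycle of $G$ is an alternating cyclic sequence which, after relabelling, reads $A_0B_0A_1B_1\cdots A_{n-1}B_{n-1}A_0$ with every consecutive pair adjacent, i.e. $A_i\cup B_i\in\hh$ and $B_i\cup A_{i+1}\in\hh$ for all $i$ (indices mod $n$); this is exactly a Hamilton $(a,b)$-cycle of $\hh$. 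So it suffices to exhibit one pair of partitions for which $G$ is Hamiltonian.

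Next I would pick $\ha$ and $\hb$ independently and uniformly at random among partitions of $V_1$ into unordered $a$-blocks, respectively of $V_2$ into unordered $b$-blocks. For a fixed $a$-set $A\subseteq V_1$, its link $\hh_A=\{T\in\binom{V_2}{b}:A\cup T\in\hh\}$ is a $b$-uniform hypergraph on $V_2$ with $|\hh_A|=\deg_\hh(A)$, a random partition $\hb$ is precisely a uniformly random perfect matching of $V_2$ into $n$ parts of size $b$, and $\deg_G(A)=|\hh_A\cap\hb|$. Since each fixed $b$-block lies in $\hb$ with probability $n/\binom{bn}{b}$, one has $\ex\,|\hh_A\cap\hb|=n\,\deg_\hh(A)/\binom{bn}{b}\ge \alpha n+4\sqrt{n\log n}$. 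The Frankl--Kupavskii inequality for the intersection of a uniform hypergraph with a random matching -- applied exactly as in the proof of Theorem~\ref{main-1} -- then bounds the probability that $|\hh_A\cap\hb|$ drops below $\alpha n+2\sqrt{n\log n}$ by $n^{-\Omega(1)}<\tfrac1{2n}$; conditioning on $\ha$ and taking a union bound over its $n$ blocks, with probability greater than $\tfrac12$ every $A\in\ha$ has $\deg_G(A)\ge \alpha n+2\sqrt{n\log n}$. The symmetric argument (conditioning on $\hb$, using the randomness of $\ha$ and the link of a $b$-block in $V_1$) gives, with probability greater than $\tfrac12$, that every $B\in\hb$ has $\deg_G(B)\ge(1-\alpha)n+2\sqrt{n\log n}$. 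Hence some outcome realises both events.

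For that outcome every non-adjacent pair $A\in\ha$, $B\in\hb$ satisfies $\deg_G(A)+\deg_G(B)\ge n+4\sqrt{n\log n}>n+1$, so by the bipartite analogue of Ore's theorem -- a balanced bipartite graph on parts of size $n$ in which every non-adjacent pair across the bipartition has degree sum at least $n+1$ is Hamiltonian -- the graph $G$ has a Hamilton cycle, and by the reduction $\hh$ contains a Hamilton $(a,b)$-cycle. I expect the middle step to be the main obstacle: one must verify that the Frankl--Kupavskii inequality genuinely applies in this setting (a uniformly random equipartition is a random perfect matching, with the link hypergraphs playing the role of the fixed hypergraph) and then calibrate its error term against the slack $4\sqrt{\log n/n}$ in the hypotheses so that it both survives the union bound over the $2n$ blocks and leaves the strict surplus demanded by the degree-sum condition -- this is where the constant $4$ and the assumption $n\ge5$ are used, running in exact parallel to the corresponding estimate for Theorem~\ref{main-1}. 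The remaining ingredients -- the reduction and the appeal to bipartite Hamiltonicity -- are soft, the only point of care being that a balanced bipartite graph forces its Hamilton cycles to alternate between the two sides, which makes the reduction an equivalence rather than merely a sufficient condition.
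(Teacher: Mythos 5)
Your proposal is correct and follows essentially the same route as the paper: choose the two perfect matchings (equipartitions) of $V_1$ and $V_2$ uniformly at random, control $\deg_G(A_i)=|\hh_A\cap \hb|$ and $\deg_G(B_j)$ via the Frankl--Kupavskii concentration inequality with a union bound over the $2n$ blocks, and finish with the Moon--Moser bipartite Ore-type theorem. The only cosmetic difference is that you split the union bound into two halves each of probability greater than $\tfrac12$, whereas the paper bounds the total failure probability by $2n\cdot\tfrac{2}{n^2}<1$ directly; both give the same conclusion.
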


\section{Proof of Theorems \ref{main-1} and \ref{main-2}}

In this section, by applying the concentration inequality for intersections of a uniform hypergraph with a random matching \cite{frankl-kupavskii} and an Ore-type theorem on bipartite graph due to Moon and Moser \cite{moon-moser}, we obtain a minimum degree condition for the existence of Hamilton $(a,b)$-cycles in $k$-uniform hypergraphs.

\begin{thm}[Frankl and Kupavskii, \cite{frankl-kupavskii}]\label{thm-fk}
Suppose that $m, \ell, t$ are integers and $m \geq  t\ell$. Let $\hg \subset\binom{[m]}{\ell}$ and let $\hm$ be a matching of size $t$ chosen from $\binom{[m]}{\ell}$ uniformly at random.  Let $\eta=|\hg\cap \hm|$ and let $\theta=|\hg|/\binom{[m]}{\ell}$.  Then $\ex [\eta] = \theta t$ and, for any positive $\gamma$ we have
\[
Pr[|\eta -\theta t|\geq 2\gamma \sqrt{t}]\leq 2e^{-\gamma^2/2}.
\]
\end{thm}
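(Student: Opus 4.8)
The plan is to compute the expectation by symmetry and then establish the tail bound with a martingale (Azuma--Hoeffding type) argument that reveals the $t$ edges of the random matching $\hm$ one at a time.

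First the expectation. Writing $\eta=\sum_{F\in\hg}\mathbf 1[F\in\hm]$ and using that $\hm$ is a uniformly random matching, every fixed $\ell$-set $F$ is equally likely to be one of the $t$ edges, so $\Pr[F\in\hm]=t/\binom{m}{\ell}$; summing over $F\in\hg$ gives $\ex[\eta]=|\hg|\,t/\binom{m}{\ell}=\theta t$ by linearity. I would record here the equivalent description of $\hm$ that drives the rest of the proof: one may generate it sequentially, choosing $M_1$ uniformly in $\binom{[m]}{\ell}$, then $M_2$ uniformly among the $\ell$-sets disjoint from $M_1$, and so on; the unordered outcome is the uniform matching, and the same symmetry shows that, conditioned on the first $i$ edges, each later edge is uniform on the untouched vertex set.

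For the concentration, let $\mathcal F_i=\sigma(M_1,\dots,M_i)$ and consider the Doob martingale $Z_i=\ex[\eta\mid\mathcal F_i]$, so that $Z_0=\theta t$ and $Z_t=\eta$. Writing $U_i=[m]\setminus(M_1\cup\cdots\cup M_i)$ and $\theta_{U}=|\{F\in\hg:F\subseteq U\}|/\binom{|U|}{\ell}$ for the induced density, the counting above gives $Z_i=(\text{number of }M_1,\dots,M_i\text{ lying in }\hg)+(t-i)\,\theta_{U_i}$, and hence the increment splits as
\[
Z_i-Z_{i-1}=\big(\mathbf 1[M_i\in\hg]-\theta_{U_{i-1}}\big)+(t-i)\big(\theta_{U_i}-\theta_{U_{i-1}}\big),
\]
a \emph{direct} term of absolute value at most $1$ and an \emph{indirect} term measuring how deleting the $\ell$ vertices of $M_i$ shifts the density that governs the remaining $t-i$ edges. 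If the increments satisfied $|Z_i-Z_{i-1}|\le c_i$ with $\sum_{i=1}^{t}c_i^2\le 4t$, the two-sided Azuma--Hoeffding inequality would give $\Pr[|\eta-\theta t|\ge 2\gamma\sqrt t]\le 2\exp\!\big(-(2\gamma\sqrt t)^2/(2\cdot 4t)\big)=2e^{-\gamma^2/2}$, which is exactly the claimed bound.

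The hard part is controlling these increments, and the real subtlety is that the indirect term is \emph{not} $O(1)$ in the worst case: if almost all edges of $\hg$ avoid $M_i$ then $\theta_{U_i}-\theta_{U_{i-1}}$ can be of order $\ell^2/|U_{i-1}|$, and since $t-i$ may be as large as $|U_{i-1}|/\ell$ this leaves a contribution of order $\ell$, so a crude per-step estimate loses a factor of $\ell$. I would therefore not bound the two terms separately but exploit their cancellation: comparing the conditional law of the remaining matching before and after revealing $M_i$ via an edge-swapping coupling, one controls the conditional second moment $\ex[(Z_i-Z_{i-1})^2\mid\mathcal F_{i-1}]$ rather than the worst-case range, and feeds this into a Freedman/Bernstein-type (equivalently, a ``typical bounded differences'' McDiarmid) refinement that still outputs the Azuma-shaped bound above. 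Making this conditional-variance estimate sharp, so that the summed variance is $O(t)$ with the correct constant, is precisely the technical core of the Frankl--Kupavskii inequality; carrying it out, together with checking the boundary indices $i$ near $t$ where $|U_{i-1}|$ is small and the naive bounds degenerate, would complete the proof.
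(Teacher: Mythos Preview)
The paper does not prove this theorem at all: it is quoted verbatim from Frankl and Kupavskii and used as a black box in the proofs of Theorems~\ref{main-1} and~\ref{main-2}. So there is no ``paper's own proof'' to compare against here.

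On your proposal itself: the expectation calculation is correct, and your identification of the difficulty is accurate --- the naive per-step bound on the Doob increments picks up an extra factor of $\ell$ from the indirect term $(t-i)(\theta_{U_i}-\theta_{U_{i-1}})$, which would ruin the constant in the exponent. However, what you have written is an outline rather than a proof. You explicitly say that controlling the conditional variance via an edge-swapping coupling ``is precisely the technical core of the Frankl--Kupavskii inequality'' and that ``carrying it out \ldots\ would complete the proof,'' but you do not carry it out. In particular, you neither specify the coupling, nor compute (or even bound) $\ex[(Z_i-Z_{i-1})^2\mid\mathcal F_{i-1}]$, nor verify that the summed conditional variance is at most $4t$ with the required constant; and the appeal to a Freedman-type inequality to recover exactly $2e^{-\gamma^2/2}$ from a variance bound needs its own justification. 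As it stands this is a plausible strategy with the key estimate asserted rather than proved.
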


\begin{thm}[Moon and Moser, \cite{moon-moser}]\label{thm-mm}
Let $G[X,Y]$ be a bipartite graph with partite sets $X,Y$ and $|X|=|Y|=n$. If $\deg(x)+\deg(y)> n$ for any non-edge $xy$ of $G$ with $x\in X$ and $y\in Y$, then $G$ contains a Hamilton cycle.
\end{thm}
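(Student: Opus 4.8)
The plan is to prove Theorem~\ref{thm-mm} by the classical edge-maximal-counterexample argument, in the spirit of Ore's proof of his degree condition for Hamilton cycles. Suppose for contradiction that $G[X,Y]$ satisfies the hypothesis but has no Hamilton cycle. The first remark is that if $xy$ is a non-edge of \emph{any} spanning subgraph of $K_{n,n}$ that contains $G$, then $\deg(x)+\deg(y)>n$ still holds there, since $xy$ is also a non-edge of $G$ and degrees can only have grown. So among all non-Hamiltonian spanning subgraphs of $K_{n,n}$ containing $G$ I would pick one, call it $G'$, with the maximum number of edges. Then $G'$ is not Hamiltonian, $G'+e$ is Hamiltonian for every $X$--$Y$ non-edge $e$ of $G'$ by maximality, and $G'\neq K_{n,n}$ (hence has at least one such non-edge) because $K_{n,n}$ is Hamiltonian --- here we use $n\ge 2$. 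Fix a non-edge $e=x_0y_0$ of $G'$ with $x_0\in X$, $y_0\in Y$; the Hamilton cycle of $G'+e$ must use $e$ (otherwise $G'$ itself would be Hamiltonian), so deleting $e$ from it yields a Hamilton path $P$ of $G'$ from $x_0$ to $y_0$.

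Since $G'$ is bipartite and $P$ spans all $2n$ vertices, starting at $x_0\in X$ and ending at $y_0\in Y$, it alternates between the two sides: write $P=w_1w_2\cdots w_{2n}$ with $w_1=x_0$, $w_{2n}=y_0$ and $w_i\in X$ exactly when $i$ is odd. The crux is a crossing-chords observation: for an even index $j$, if both $w_1w_j\in E(G')$ and $w_{j-1}w_{2n}\in E(G')$, then
\[
w_1\,w_2\,\cdots\,w_{j-1}\,w_{2n}\,w_{2n-1}\,\cdots\,w_j\,w_1
\]
is a Hamilton cycle of $G'$, contradicting the maximality of $G'$. Hence, writing $A=\{\,j:w_j\in N_{G'}(x_0)\,\}$, $B=\{\,i:w_i\in N_{G'}(y_0)\,\}$, and $A-1=\{\,j-1:j\in A\,\}$, we must have $(A-1)\cap B=\emptyset$.

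To conclude I would count indices. Because $x_0y_0\notin E(G')$, the vertex $w_{2n}=y_0$ is not a neighbour of $x_0$ and $w_1=x_0$ is not a neighbour of $y_0$; thus $A\subseteq\{2,4,\dots,2n-2\}$ with $|A|=\deg_{G'}(x_0)$, and $B\subseteq\{3,5,\dots,2n-1\}$ with $|B|=\deg_{G'}(y_0)$. Therefore $A-1$ and $B$ are disjoint subsets of the $n$-element set of odd integers in $[2n-1]$, forcing $\deg_{G'}(x_0)+\deg_{G'}(y_0)=|A|+|B|=|A-1|+|B|\le n$, which contradicts $\deg_{G'}(x_0)+\deg_{G'}(y_0)>n$. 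Hence $G$ contains a Hamilton cycle after all. The one place demanding care is the bookkeeping in the crossing-chords step: one must check that the displayed closed walk really is a Hamilton cycle of $G'$ --- its two chords are edges exactly because $j\in A$ and $j-1\in B$, and it covers every vertex once since $\{1,\dots,j-1\}\cup\{j,\dots,2n\}=[2n]$ --- and that excluding the non-edge $x_0y_0$ is what places $A-1$ and $B$ inside a common ground set of size exactly $n$; everything else is routine.
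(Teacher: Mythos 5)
Your proof is correct. The paper itself offers no proof of this statement --- it is quoted verbatim from Moon and Moser \cite{moon-moser} as a black box --- so there is no internal argument to compare against; what you have written is essentially the original Moon--Moser proof, i.e.\ Ore's edge-maximal-counterexample argument adapted to the bipartite setting, and every step checks out: the hypothesis is inherited by spanning supergraphs, the Hamilton cycle of $G'+e$ must use $e$, the path alternates sides so $A$ consists of even indices and $B$ of odd ones, the crossing-chords switch forces $(A-1)\cap B=\emptyset$, and excluding the non-edge $x_0y_0$ places both $A-1$ and $B$ inside the $n$ odd integers of $[2n-1]$, yielding $\deg_{G'}(x_0)+\deg_{G'}(y_0)\le n$, a contradiction. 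The only caveat, which you already flag, is that the statement as printed omits the hypothesis $n\ge 2$ (for $n=1$ the degree condition forces $G=K_{1,1}$, which has no Hamilton cycle); this is harmless in the paper's application, where the theorem is invoked with $n=t\ge 5$.
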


\begin{proof}[Proof of Theorem \ref{main-1}.]
Let $t=n/k\geq 5$. A tuple $(A_1, \ldots,  A_t, B_1, \ldots, B_t)$ is called an {\it $(a,b)$-partition} of $[n]$ if  $A_1, \ldots,  A_t, B_1, \ldots, B_t$ are pairwise disjoint, their union is $[n]$, and $|A_i|=a$, $|B_j|=b$ for every $i=1,2,\ldots,t$ and $j=1,2,\ldots,t$. Let $\Omega$ be the set of  $(a,b)$-partitions of $[n]$ and let $(A_1, \ldots,  A_t, B_1, \ldots, B_t)$ be an $(a,b)$-partition chosen from $\Omega$ uniformly at random. Let $X=\{A_1,A_2,\ldots,A_t\}$ and  $Y=\{B_1,B_2,\ldots,B_t\}$. Consider the  bipartite graph $G[X,Y]$ where we have an edge $(A_i,B_j)$ iff $A_i\cup B_j\in \hh$. If there exists an  $(a,b)$-partition of $[n]$  so that $\deg_G(A_i)+\deg_G(B_j)> t$ holds for all pairs $(i,j)$ with $i,j\in [t]$, then by Theorem \ref{thm-mm} there is a Hamilton cycle in $G[X,Y]$, which is also a Hamilton $(a,b)$-cycle of $\hh$. Thus we are left to show that the probability that $\deg(A_i)+\deg(B_j)> t$ for all pairs $(i,j)$ is positive.

Let $A$ be a given $a$-set in $[n]$ and $B$ be a given $b$-set in $[n]$. Set $\hh[A]=\{S\colon S\cup A\in \hh\}$, $\hh[B]=\{S\colon S\cup B\in \hh\}$, $\eta_A=|\hh[A]\cap Y|$, $\eta_B=|\hh[B]\cap X|$, $\gamma = 2\sqrt{\log t}$,
$\alpha_A = |\hh[A]|/\binom{n-a}{b}$ and $\beta_B = |\hh[B]|/\binom{n-b}{a}$. Note that $\alpha_A \geq \alpha+4\sqrt{\log t/t}$. It follows that
\begin{align}\label{ineq-1}
 Pr\left[\deg_G(A_i)\leq\alpha t\right] = &\sum_{A\in \binom{[n]}{a}} Pr\left[\deg_G(A_i)\leq\alpha t \mid A_i=A\right]\cdot Pr[A_i=A]\nonumber\\[4pt]
= &\binom{n}{a}^{-1} \sum_{A\in \binom{[n]}{a}} Pr\left[\eta_A\leq\alpha t  \mid A_i=A\right]\nonumber\\[4pt]
\leq &\binom{n}{a}^{-1} \sum_{A\in \binom{[n]}{a}} Pr\left[\eta_A\leq\alpha_A t-4\sqrt{t\log t}  \mid A_i=A\right].
\end{align}
Since $Y$ can be viewed as a random $t$-matching chosen uniformly from $\binom{[n]\setminus A}{b}$ under the condition $A_i=A$, by Theorem \ref{thm-fk} we have
\begin{align}\label{ineq-2}
  Pr\left[\left|\eta_A -\alpha_A t\right|\geq 4\sqrt{t\log t}\mid A_i=A \right]\leq \frac{2}{t^2}.
\end{align}
Combining \eqref{ineq-1} and \eqref{ineq-2}, we obtain that
\[
Pr\left[\deg(A_i)\leq\alpha t\right] \leq \frac{2}{t^2}.
\]
Similarly, we can show that
\[
Pr\left[\deg(B_j)\leq(1-\alpha) t\right] \leq \frac{2}{t^2}.
\]
Then by the union bound, with probability  at most $2t\cdot\frac{2}{t^2}<1$  one of the events $\deg_G(A_i)\leq\alpha t$ and $\deg(B_j)\leq(1-\alpha)t$ occurs. Thus with positive probability we have
\[
\deg_G(A_i)>  \alpha t \mbox{ and }\deg_G(B_j)> (1-\alpha)t
\]
for all $i=1,2,\ldots,t$ and $j=1,2,\ldots,t$. It follows that with positive probability
\[
\deg_{G}(A_i)+\deg_{G}(B_j)>t
\]
for all pairs $(i,j)$ and this completes the proof.
\end{proof}

\begin{proof}[Proof of Theorem \ref{main-2}.]
Let $X=\{A_1,A_2,\ldots,A_n\}$ be a perfect matching in $\binom{V_1}{a}$ and  $Y=\{B_1,B_2,\ldots,B_n\}$ be a perfect matching in $\binom{V_2}{b}$, both of them are chosen uniformly at random. Consider the  bipartite graph $G[X,Y]$ where we have an edge $(A_i,B_j)$ iff $A_i\cup B_j\in \hh$. Let $A$ be a given $a$-set in $V_1$ and $B$ be a given $b$-set in $V_2$. Set $\hh[A]=\{S\subset V_2\colon S\cup A\in \hh\}$, $\hh[B]=\{S\subset V_1\colon S\cup B\in \hh\}$, $\eta_A=|\hh[A]\cap Y|$, $\eta_B=|\hh[B]\cap X|$, $\gamma = 2\sqrt{\log n}$,
$\alpha_A = |\hh[A]|/\binom{bn}{b}$ and $\beta_B = |\hh[B]|/\binom{an}{a}$. Then by Theorem \ref{thm-fk}, we have
\[
Pr\left[\left|\eta_A -\alpha_A n\right|\geq 4\sqrt{n\log n}\right]\leq \frac{2}{n^2}\mbox{ and } Pr\left[\left|\eta_B -\beta_B n\right|\geq 4\sqrt{n\log n}\right]\leq \frac{2}{n^2}.
\]
Note that $\alpha_A \geq \alpha+4\sqrt{\log n/n}$. It follows that
\begin{align*}
 Pr\left[\deg_G(A_i)\leq\alpha n \right] = &\sum_{A\in \binom{V_1}{a}} Pr\left[\deg_G(A_i)\leq\alpha n \mid A_i=A\right]\cdot Pr[A_i=A]\\[4pt]
= &\binom{an}{a}^{-1} \sum_{A\in \binom{V_1}{a}} Pr\left[\eta_A\leq\alpha_A n-4\sqrt{n\log n} \right]\\[4pt]
\leq &\binom{an}{a}^{-1} \sum_{A\in \binom{V_1}{a}} \frac{2}{n^2} = \frac{2}{n^2}.
\end{align*}
By the same argument, we have
\[
Pr\left[\deg_G(B_j)\leq(1-\alpha) n \right] \leq \frac{2}{n^2}.
\]
Then by the union bound, with positive probability $\deg_G(A_i)+\deg_G(B_j)>  n$ for all $i=1,2,\ldots,n$ and $j=1,2,\ldots,n$. Thus, there exist perfect matchings $X$ and $Y$ so that $\deg_G(A_i)+\deg_G(B_j)> n$ holds for all pairs $(i,j)$ with $i,j\in [n]$. By Theorem \ref{thm-mm} the theorem follows.
\end{proof}

\section{Concluding Remarks}

Instead of the usual absorption approach, we use a concentration inequality for the size of intersections of a uniform hypergraph and a random matching developed in \cite{frankl-kupavskii}. By a probabilistic argument, a Hamilton $(a,b)$-cycle is transferred to a Hamilton cycle in ordinary bipartite graphs. By an Ore-type condition on bipartite graphs due to Moon and Moser \cite{moon-moser}, we give a minimum degree condition that guarantees a Hamilton $(a,b)$-cycle in $k$-uniform hypergraphs.  Specifically, for $\ell\geq k/2$ the asymptotically tight minimum $\ell$-degree that guarantees a Hamilton $(k-\ell,\ell)$-cycle is given in Corollary \ref{cor-1}. Note that H\`{a}n, Han and Zhao \cite{hanhanzhao} determined the exact minimum $(k/2)$-degree condition that guarantees the existence of a Hamiltion $(k/2,k/2)$-cycle. It seems an interesting problem to determine the exact minimum $\ell$-degree condition that guarantees a Hamilton $(k-\ell,\ell)$-cycle for $\ell> k/2$.

\vspace{6pt}\noindent
{\bf Acknowledgement:} We thank the referees for their helpful comments. The work was
supported by the National Natural Science Foundation of China (No. 11701407).

\end{document}